\documentclass[letterpaper, 10 pt, conference]{ieeeconf}  

\IEEEoverridecommandlockouts                              
\usepackage{amsmath, amssymb}
\usepackage{cite}
\usepackage{algorithm}
\usepackage{algpseudocode}
\usepackage{graphicx}
\usepackage{bm}
\newtheorem{theorem}{Theorem}
\newtheorem{proposition}{Proposition}

\title{\LARGE \bf
Moral Hazard in LTI Dynamics: A Hypothesis Testing Approach}

\author{Jaewon Jeong, Pan-Yang Su, S. Shankar Sastry, and Anil Aswani
\thanks{This work was partially supported by Collaborative Research: Transferable, Hierarchical, Expressive, Optimal, Robust, Interpretable NETworks (THEORINET) under Simons Foundation Award MPS-MODL-00814647 and National Science Foundation (NSF) Grant DMS-2031899, and by the NSF under Grant DGE-2125913.}
\thanks{J. Jeong and A. Aswani are with Industrial Engineering and Operations Research,
        University of California, Berkeley, CA 94720
        {\tt\small \{jaewon\_jeong,aaswani\}@berkeley.edu}}%
\thanks{P.Y. Su and S.S. Sastry are with the Electrical Engineering and Computer Sciences, University of California, Berkeley, CA 94720
        {\tt\small \{pan\_yang\_su,shankar\_sastry\}@berkeley.edu}}%
}

\begin{document}

\maketitle
\thispagestyle{empty}
\pagestyle{empty}

\begin{abstract}
Many incentive design problems must contend with information asymmetries due to non-observation of efficiency (adverse selection) or non-observation of effort (moral hazard). And although a growing body of literature considers incentive design in control systems, the problem of designing incentives for control systems under information asymmetries has been less well-studied. This paper considers a model of moral hazard within control systems. In our model, the control system is described by an (affine) linear time-invariant (LTI) system with process noise. There is an agent who gets to choose (from between two choices) a linear state-feedback controller to apply to the LTI system, with one of the state-feedback controllers having a higher quadratic cost on the control inputs than the other. Our goal is to design a payment scheme that incentivizes the agent to choose the state-feedback controller that minimizes a quadratic cost on system states plus the time-discounted payment amount, subject to the understanding that the agent bears the control cost while being risk-averse with respect to their time-discounted payment. We formulate the problem as a constrained optimization, and prove that for a  payment given after a fixed (but optimizable) time horizon the optimal payment scheme chooses the payment amount using a likelihood ratio hypothesis test. We numerically demonstrate our results by applying the derived optimal payment scheme to two examples: load frequency control (LFC) in power systems and wellness interventions for body weight loss.
\end{abstract}

\section{INTRODUCTION}


Economic aspects of control systems are increasingly important given the growth of applications at the intersection of economics and control, like power \cite{Tan2010,5718175} or transportation \cite{su2024incentive,11298540} systems. As such, an active body of literature within controls considers various problems of incentive design, in which a \emph{principal} would like to design a scheme for payments (or some similar motivating force) to an \emph{agent} to encourage the agent to take actions that are favorable for the principal. Though incentive design in economics focuses on asymmetries of information between participants \cite{Laffont}, the design of incentives in control systems can be challenging even in the absence of information asymmetries. In this work, we propose a principal-agent model for stochastic LTI systems with continuous outputs subject to moral hazard and show that a likelihood ratio test on the continuous output trajectories constitutes an optimal contract for incentivizing the agent's hidden controller choice.
\subsection{Incentive Design in Control Systems}

Much of the literature on incentive design for control systems focuses on the setting of perfect information, an early example of which is \cite{HO1982167}. In these models, the principal is assumed to have perfect knowledge about the agent and their control actions. This line of work has extended to settings such as bilevel model predictive control (BiMPC) \cite{mintz2018control,mintz2023behavioral} to design incentives in situations similar to what occurs in hierarchical model predictive control (MPC), and this work has recently been generalized to design incentives in more complex hierarchical MPC structures \cite{thirugnanam2025dynamic}.

\subsection{Information Asymmetry in Control Systems}

A closely-related line of literature \cite{ratliff2019perspective} considers incentive design for control systems under various types of information asymmetry. Given the surge of data being generated by modern systems, more recent work has combined ideas from machine learning with control to design incentives under information asymmetry. An example of such are models combining a principal-agent framework with multiarmed bandits with hidden (to the principal) rewards \cite{dogan2023repeated,dogan2023estimating}.

The work most closely related to our model is \cite{Yang2014}, which designs dynamic contracts with real-time and end-time compensation for stochastic control systems by solving an associated Hamilton Jacobi-Bellman (HJB) equation. Our work differs in a few notable ways: The most significant difference is that we propose a simpler contract that consists of a fixed payment time, two payment amounts, and regions of the observation-space corresponding to each payment amount (all of which are optimized over) for a setup where the agent can choose between only two state-feedback controllers. Other differences are that \cite{Yang2014} considers continuous-time nonlinear models, whereas in our setup we focus on discrete-time linear models with linear state-feedback controllers.

\subsection{Contribution and Outline}


We make two main contributions: For a model of moral hazard with a stochastic linear time-invariant (LTI) system in discrete-time and two payment levels, we show that the optimal contract depends on a likelihood ratio test. Similar results are known when outputs are discrete \cite{Laffont,saig2023delegated}; however, our result  considers continuous outputs. We also derive the distribution of the log likelihood ratio statistic under stochastic LTI dynamics, showing they take the form of generalized sums of chi-squared distributions.

The remainder of this paper is organized as follows. Section \ref{sec:cdp} presents the system model and the contract design problem. In Section \ref{sec:soc}, we theoretically characterize the structure of the optimal contract, and Section \ref{sec:ctoc} provides a practical numerical algorithm to compute the optimal contract. Lastly, Section \ref{sec:ae} presents applications of the proposed contract design to load frequency control in power systems and a wellness program to incentivize body weight loss.

\section{CONTRACT DESIGN PROBLEM}
\label{sec:cdp}
In the problem we consider, there is a \emph{principal} that wishes to control a dynamical system; however, the principal must pay an \emph{agent} to choose control actions on their behalf. The principal incurs a state cost, and the agent incurs a control cost. The agent picks a controller from a choice of two state-feedback policies, with one of the state-feedback policies having a higher expected control cost than the other. The principal's goal is to design a contract that maps any state trajectory to a payment amount, where the payment serves as a cost to the principal and a reward to the agent. 

\subsection{Control System and Costs}
\subsubsection{System Dynamics}
The system the principal would like to control is described by a discrete-time linear time-invariant (LTI) system with process and measurement noise
\begin{align}
    x_{k+1}&=Ax_{k}+Bu_{k}+w_{k}\\
    y_k &= Cx_k + \epsilon_k
\end{align}
where $x_{k} \in \mathbb{R}^{n}$ is the system state, $y_k \in \mathbb{R}^p$ is the noisy output,  $u_k \in \mathbb{R}^q$ is the control input, the matrices have dimensions $A\in\mathbb{R}^{n\times n}$,  $B\in\mathbb{R}^{n\times q}$, $C\in\mathbb{R}^{p\times n}$, the $w_{k} \sim \mathcal{N}(\mu_{w}, \Sigma_{w})$ with $\mu_w \in \mathbb{R}^n$ and $\Sigma_w \in S^n_+$ (with $S^n_+$ denoting the set of $\mathbb{R}^{n\times n}$ positive semidefinite matrices) is independent and identically distributed (i.i.d.) jointly Gaussian process noise, and $\epsilon_{k} \sim \mathcal{N}(0, \Sigma_{e})$ with $\epsilon_k\in\mathbb{R}^p$ and $\Sigma_e \in S^p_+$ is i.i.d. jointly Gaussian measurement noise. Our model allows the process noise to have non-zero mean $\mu_w$ because this allows us to include affine LTI systems in our results. The initial state $x_0$ has a jointly Gaussian distribution $x_0 \sim \mathcal{N}(\mu_0, \Sigma_{0})$ with $\mu_0 \in \mathbb{R}^n$ and $\Sigma_0 \in S^n_+$. We further assume that $x_0$, $w_t$, $\epsilon_t$ for all $t$ are independent.

In our model, we do \emph{not} require $(A,C)$ to be observable because such an assumption is stronger than needed for our problem. We will instead make a related but weaker assumption about the distributions of the $y_0,y_1,\ldots$. This assumption will appear in the statements of our theorems. 

\subsubsection{Agent Costs}

The agent chooses between two linear state-feedback controllers, one $u_k = K_0 x_k$ which is interpreted as a \emph{low effort} controller, and the other $u_k = K_1 x_k$ which is interpreted as a \emph{high effort} controller. For the matrices $K_0, K_1 \in \mathbb{R}^{q\times p}$, assume  $(A+BK_0)$ and $(A+BK_1)$ are Schur stable. The interpretation of controller effort levels is driven by a time-discounted quadratic control cost
\begin{equation}    
    J^A = \mathbb{E}\Big[ \textstyle\lim_{N \to \infty} \sum_{t=0}^{N}\gamma_a^t \cdot(u_t^\top R u_t + r^\top u_t)\Big],
\end{equation}
where $R \in \mathbb{R}^{q\times q}$ and $r \in \mathbb{R}^q$ are constants of the quadratic cost, and $\gamma_a \in (0,1)$ is a discount factor. Let $J^A_0$ be the value of this cost under the low effort controller $K_0$, and let $J^A_1$ be the value of this cost under the high effort controller $K_1$. We assume the values of the various constant matrices and vectors  in our model are such that $J^A_1 > J^A_0$ to ensure the high versus low effort interpretation of $K_1$ versus $K_0$.

The agent also receives a payment $\pi$ at time $T$ from the principal as an incentive, and the agent is assumed to be risk-averse with respect to this payment amount. We model the time-discounted utility of the payment as $\gamma_a^T\cdot U(\pi)$, where $U: \mathbb{R}\rightarrow\mathbb{R}$ is a mapping from a payment amount to a utility. We assume that $U$ satisfies the following properties, which are standard in risk-aversion models \cite{Laffont}: $U(0) = 0$, $U' > 0$, and $U'' < 0$. 
Hence, the agent's total cost is given by $J^A - \gamma_a^T \cdot U(\pi)$. Note that the minus sign implies that positive utility reduces the agent's cost.

\subsubsection{Principal Costs}

The principal incurs a time-discounted state cost
\begin{equation}
    J^P = \mathbb{E}\Big[ \textstyle\lim_{N \to \infty} \sum_{t=0}^{N}\gamma_p^t \cdot(x_t^\top Q x_t + q^\top x_t)\Big],
\end{equation}
where $Q \in \mathbb{R}^{p\times p}$ and $q \in \mathbb{R}^p$ are constants of the cost, and $\gamma_p \in (0,1)$ is a discount factor. Let $J^P_0$ be the value of this cost under the low effort controller $K_0$, and let $J^P_1$ be the value of this cost under the high effort controller $K_1$. 

The principal makes a payment $\pi$ at time $T$ to the agent as an incentive, and the principal incurs a cost of $\gamma_p^T \cdot \pi$ for this payment. Hence, the principal's total cost is $J^P + \gamma_p^T \cdot \pi$. Note that $\pi$ is a function, and the choice of $\pi$ and $T$ will be formulated as an optimization problem in Section \ref{subsec: contract design problem}.


\subsection{Contract Design Problem}
\label{subsec: contract design problem}
In our setup, the principal must make two choices. The first choice is to decide whether or not they would like the agent to exert high effort via a payment. This is a choice because, depending on the coefficient values, the cost of payment that would be required to induce the agent to exert high effort may be higher than the cost reduction the principal gets when the agent exerts high effort. The second choice is that if the principal decides it is beneficial to induce high effort from the agent, then the principal must choose a contract to offer the agent. However, the problem of designing the optimal contract in moral hazard settings typically proceeds backwards \cite{Laffont}: The principal first computes an optimal contract $\pi^*$ given an affirmative decision to induce high effort, and then the principal checks whether it is beneficial to execute this contract to induce high effort. 

In our setting, moral hazard arises because the principal does not directly observe the effort level (i.e., the choice of $K_0$ or $K_1$) of the agent. Instead, we assume the principal takes measurements $y_0, y_1, \ldots$. The mapping between the agent's choice of effort level and the outputs is probabilistic because of the process and measurement noise, and so it is impossible to perfectly identify the agent's choice of effort level from the observations. What this means is that the contract design problem must necessarily be probabilistic.

Before we present the optimal contract design problem, we first formally specify the structure of the class of contracts we consider. Let $\mathbf{Y}_T = (y_1,\ldots,y_T)$. Then a contract is a function $\pi: \mathbf{Y}_T \mapsto \pi(\mathbf{Y}_T) \in \mathbb{R}$ that maps the observed outputs $\mathbf{Y}_T = (y_1,\ldots,y_T)$ over a finite horizon $T$ to a payment $\pi(\mathbf{Y}_T)$, which may be positive or negative. 
For the purpose of practical implementation, it is important for the contract to be easily interpretable by the agent. Consequently, here we further restrict the contract to have the following easily-interpretable structure:
\begin{equation}
\pi(\mathbf{Y}_T) = \begin{cases} \pi_0, & \text{if } \mathbf{Y}_T \notin \mathcal{R} \\ \pi_1, & \text{if } \mathbf{Y}_T \in \mathcal{R} 
\end{cases}
\end{equation}
where the values $T \in\mathbb{Z}_{+}$ and $\pi_0,\pi_1 \in \mathbb{R}$ and the set $\mathcal{R}$ are to be optimized in the contract design problem. The interpretation is that $\mathcal{R}$ is a region for which the state trajectory is more likely to belong to under high effort, whereas the complement is a region for which the state trajectory is more likely to belong to under low effort. The contract then pays $\pi_0$ for low effort, and $\pi_1$ for high effort.

It is now straightforward to formulate the contract design problem assuming the principal wishes to induce high effort. Let $H_1$ be the distributions of $y_0,y_1,\ldots$ that result from our model when the agent chooses high effort $K_1$, and let $H_0$ be the distributions of $y_0,y_1,\ldots$ that result from our model when the agent chooses low effort $K_0$. We assume that $H_0 \neq H_1$. For notational convenience, we define $\alpha(\mathcal{R}) = \mathbb{P}[\mathbf{Y}_T \in \mathcal{R}\ |\ H_0]$ and $\beta(\mathcal{R}) = \mathbb{P}[\mathbf{Y}_T \in \mathcal{R}\ |\ H_1]$. We will drop the argument and write $\alpha,\beta$ when the meaning is clear from the context. The contract design problem is 
\begin{align}
     \min\ & J_1^P + \gamma_p^T \cdot \big(\pi_0 \cdot (1-\beta) + \pi_1 \cdot \beta\big) \label{eqn:cost}\\
     & J_1^A - \gamma_a^T\cdot\big(U(\pi_0) \cdot (1-\beta) + U(\pi_1)\cdot\beta) \leq \nonumber \\
     & \qquad J_0^A - \gamma_a^T\cdot\big(U(\pi_0) \cdot (1-\alpha) + U(\pi_1)\cdot\alpha) \label{eqn:icc}\\
     & J_1^A - \gamma_a^T\cdot\big(U(\pi_0) \cdot (1-\beta) + U(\pi_1)\cdot\beta) \leq J_0^A \label{eqn:pc}
\end{align}
where the optimization is over the horizon $T \in \{1,\ldots,\overline{T}\}$ with a constraint on the maximum  horizon $\overline{T} \in \mathbb{Z}_+$, the payment amounts $\pi_0,\pi_1\in\mathbb{R}$ with $\pi_1 \geq \pi_0$, and the decision region $\mathcal{R} \subseteq \mathbb{R}^{T \cdot p}$. The principal minimizes its expected total cost (\ref{eqn:cost}), subject to a constraint (\ref{eqn:icc}) that ensures the contract is such that the agent's expected cost when exerting high effort is lower than the agent's expected cost when exerting low effort (i.e., an \emph{incentive compatibility} constraint that ensures high effort is incentivized) and a constraint (\ref{eqn:pc}) that ensures the agent incurs a lower expected cost from the contract when exerting higher effort than if they exert low effort and receive no payment (i.e., a \emph{participation} constraint that ensures it is rational for the agent to accept the contract). 

\section{STRUCTURE OF OPTIMAL CONTRACT}
\label{sec:soc}

In this section, we theoretically characterize the mathematical structure of the optimal contract.

\subsection{Characterization of Optimal Contract}
The key result is a theorem that relates our contract design problem to the Neyman-Pearson testing framework. 

\begin{proposition}
\label{prop:optimality}
Suppose $T,\pi_0,\pi_1$ are fixed with $\pi_1 >  \pi_0$, and assume there exists $\mathcal{R}$ such that
\begin{align}
    \alpha &\leq - U(\pi_0)/(U(\pi_1)-U(\pi_0)) \label{eqn:alph}\\
    \beta &= \Delta - U(\pi_0)/(U(\pi_1)-U(\pi_0)) \label{eqn:beth}
\end{align}
for the constant
\begin{equation}
    \Delta := \gamma_a^{-T}\cdot\frac{J^A_1 - J^A_0}{U(\pi_1) - U(\pi_0)}.\label{eqn:Delta}
\end{equation} Then there exists an optimal decision region $\mathcal{R}^*$ that solves (\ref{eqn:cost})--(\ref{eqn:pc}), which takes the form of
\begin{equation}
\label{eqn:lrt}
    \mathcal{R}^* = \big\{\mathbf{Y}_T : L_1(\mathbf{Y}_T) \geq \kappa \cdot L_0(\mathbf{Y}_T)\big\}.
\end{equation}
where $L_0(\cdot)$ and $L_1(\cdot)$ are the (probabilistic) likelihoods of $\mathbf{Y}_T$ under $H_0$ and $H_1$, respectively, and $\kappa \geq 0$ is a scalar.
\end{proposition}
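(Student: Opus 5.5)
With $T,\pi_0,\pi_1$ fixed, the only free object is $\mathcal{R}$. It enters the problem solely through the two numbers $\alpha(\mathcal{R})$ and $\beta(\mathcal{R})$. The plan is to rewrite (\ref{eqn:cost})--(\ref{eqn:pc}) as constraints on $(\alpha,\beta)$, show that the best feasible $\beta$ is the one in (\ref{eqn:beth}), and then invoke the Neyman--Pearson lemma to get a region of that power of the form (\ref{eqn:lrt}).

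\textbf{Step 1 (reduce the constraints).} Write $D := U(\pi_1)-U(\pi_0)$, which is positive because $U'>0$ and $\pi_1>\pi_0$.
\begin{itemize}
\item Rearranging the incentive compatibility constraint (\ref{eqn:icc}) gives $\gamma_a^T D(\beta-\alpha) \ge J_1^A - J_0^A$, that is, $\beta - \alpha \ge \Delta$ with $\Delta$ as in (\ref{eqn:Delta}).
\item Rearranging the participation constraint (\ref{eqn:pc}) gives $\gamma_a^T\big(U(\pi_0)+D\beta\big) \ge J_1^A-J_0^A$, that is, $\beta \ge \Delta - U(\pi_0)/D$.
\item The objective (\ref{eqn:cost}) equals $J_1^P + \gamma_p^T\big(\pi_0 + (\pi_1-\pi_0)\beta\big)$. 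Since $\pi_1-\pi_0>0$, it is strictly increasing in $\beta$ and does not depend on $\alpha$.
\end{itemize}
So the problem is to minimize $\beta$ subject to $\beta \ge \Delta - U(\pi_0)/D$ and $\beta \ge \alpha + \Delta$.

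\textbf{Step 2 (identify the optimal value).} The participation constraint alone gives the lower bound $\beta \ge \beta^\ast := \Delta - U(\pi_0)/D$ for every feasible $\mathcal{R}$. By hypothesis some $\mathcal{R}$ has $\beta(\mathcal{R})=\beta^\ast$ and $\alpha(\mathcal{R}) \le -U(\pi_0)/D$. For this region, $\alpha+\Delta \le \beta^\ast = \beta$, so the incentive compatibility constraint also holds. Hence this $\mathcal{R}$ is feasible and attains the lower bound, and it is optimal. More generally, any region with $\beta=\beta^\ast$ and $\alpha \le -U(\pi_0)/D$ is optimal. The task is therefore to find such a region among likelihood-ratio regions.

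\textbf{Step 3 (Neyman--Pearson).} Among all regions with $\beta(\mathcal{R}) = \beta^\ast$, choose one that minimizes $\alpha$. This is the Neyman--Pearson problem with the roles of the hypotheses swapped: the "power" is $1-\alpha$ under the alternative $H_0$, tested on the complement $\mathcal{R}^c$, at "size" $1-\beta^\ast$ under $H_1$. By the Neyman--Pearson lemma, a minimizer is a threshold set $\{L_1 \ge \kappa L_0\}$. Its $\alpha$ is at most the $\alpha$ of the assumed region, so it also satisfies $\alpha \le -U(\pi_0)/D$, and by Step 2 it is optimal.

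\textbf{Main obstacle: matching $\beta=\beta^\ast$ exactly with a deterministic threshold set.} The map $\kappa \mapsto H_1(L_1 \ge \kappa L_0)$ can jump where the likelihood ratio has atoms, and randomization is not allowed in the contract. I would handle this as follows.
\begin{itemize}
\item The tie set $\{L_1=\kappa L_0\}$ can be split deterministically to hit $\beta^\ast$ exactly. This is possible because $\mathbf{Y}_T$ is Gaussian on $\mathbb{R}^{Tp}$ and hence atomless, provided each hypothesis has a density (the paper's distributional assumption). Any measurable split of the tie set keeps the region a likelihood-ratio region up to the tie, and the Neyman--Pearson optimality argument is unaffected by how ties are assigned.
\item For the Gaussian likelihoods here, the log-ratio is a nondegenerate quadratic whenever $H_0\neq H_1$. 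Its level sets then have Lebesgue measure zero, so no ties arise and the set in (\ref{eqn:lrt}) works directly.
\end{itemize}
Finally, the boundary cases are covered by $\kappa=0$ (the whole space) and $\kappa$ large (the set where $L_0=0$).
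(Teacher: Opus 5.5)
Your proposal is correct and follows the paper's route: reduce the problem to minimizing $\beta$ subject to $\beta \ge \alpha + \Delta$ and $\beta \ge \beta^\ast := \Delta - U(\pi_0)/(U(\pi_1)-U(\pi_0))$, then apply the Neyman--Pearson lemma with $\beta$ held at $\beta^\ast$ to get a likelihood-ratio region whose $\alpha$ is no larger than that of the assumed region, so incentive compatibility carries over. You are more explicit than the paper about this carry-over step and about why a deterministic threshold hits $\beta^\ast$ exactly (level sets of the Gaussian log-likelihood ratio are Lebesgue-null); your closing remark about large $\kappa$ is never needed, since $\alpha \ge 0$ and the hypothesis on $\alpha$ force $U(\pi_0) \le 0$ and hence $\beta^\ast \ge \Delta > 0$.
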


\begin{proof}
Observe that the only decision variable here is $\mathcal{R}$. We can rewrite the principal's cost (\ref{eqn:cost}) as
\begin{equation}
    J_1^P + \gamma_p^T \cdot \pi_0 + \gamma_p^T\cdot\big(\pi_1 -\pi_0\big)\cdot\beta. \label{eqn:cost2}
\end{equation}
Because $\pi_1 > \pi_0$ and $\gamma_p \in (0,1)$, minimizing (\ref{eqn:cost}) subject to (\ref{eqn:icc}), (\ref{eqn:pc}) is equivalent to minimizing $\beta$ subject to (\ref{eqn:icc}), (\ref{eqn:pc}). Rearranging gives that (\ref{eqn:icc}), (\ref{eqn:pc}) can be rewritten as
\begin{align}
    \beta &\geq \Delta + \alpha \label{eqn:icc2}\\
    \beta &\geq \Delta - U(\pi_0)/(U(\pi_1)-U(\pi_0)) \label{eqn:pc2}
\end{align}
Thus, the principal's problem is equivalent to minimizing $\beta$ subject to (\ref{eqn:icc2}), (\ref{eqn:pc2}). Call this optimization problem $\mathbf{P1}$.

We next consider minimizing $\alpha$ subject to (\ref{eqn:pc2}). Call this optimization problem $\mathbf{P2}$. By the assumptions corresponding to (\ref{eqn:alph}), (\ref{eqn:beth}), the generalized Neyman-Pearson Lemma (see for instance Theorem 3.6.1 of \cite{lehmann2022testing}) implies that (\ref{eqn:lrt}) is optimal for $\mathbf{P2}$ when $\kappa \geq 0$ is chosen such that (\ref{eqn:beth}) holds. (Such a value for $\kappa$ exists because the relevant distributions are continuous.) This means (\ref{eqn:lrt}) is an optimal solution to $\mathbf{P2}$ where (\ref{eqn:alph}), (\ref{eqn:beth}) hold, which implies that this optimal solution to $\mathbf{P2}$ is also an optimal solution to $\mathbf{P1}$.
\end{proof}

The optimal decision region (\ref{eqn:lrt}) corresponds to a likelihood ratio test (LRT). And though the above result assumes $T,\pi_0,\pi_1$ are fixed, our next result shows that the optimal contract still retains this LRT structure even when we also optimize over the decision variables $T,\pi_0,\pi_1$.

\begin{theorem}
\label{thm:lrtopt}
If we have that $H_0 \neq H_1$, then an optimal solution $T^*, \pi_0^*, \pi_1^*, \mathcal{R}^*$ exists for the contract design (\ref{eqn:cost})--(\ref{eqn:pc}). The optimal payments are
\begin{align}
\pi_0^* &= \textstyle U^{-1}\Big(\gamma_a^{-T^*}\cdot\frac{-\alpha(\mathcal{R}^*)}{\beta(\mathcal{R}^*) - \alpha(\mathcal{R}^*)}\cdot(J_1^A-J_0^A)\Big)\label{eqn:pi0star}\\
\pi_1^* &= \textstyle U^{-1}\Big(\gamma_a^{-T^*}\cdot\frac{1-\alpha(\mathcal{R}^*)}{\beta(\mathcal{R}^*) - \alpha(\mathcal{R}^*)}\cdot(J_1^A-J_0^A)\Big)\label{eqn:pi1star}
\end{align}
Furthermore, the optimal $\mathcal{R}^*$ takes the form of (\ref{eqn:lrt}) with $T^*$.
\end{theorem}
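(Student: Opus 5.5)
The plan is to change variables from payments to utilities and then optimize in stages: first over payments for a fixed $(T,\mathcal{R})$, then over $\mathcal{R}$ for fixed $T$, and finally over the finite set of horizons. Set $v_i = U(\pi_i)$, so that $\pi_i = U^{-1}(v_i)$. Since $U'>0$ and $U''<0$, $U^{-1}$ is strictly increasing and strictly convex on the range of $U$. With $D := \gamma_a^{-T}(J_1^A-J_0^A) > 0$, the constraints (\ref{eqn:icc}) and (\ref{eqn:pc}) become linear in $(v_0,v_1)$:
\begin{equation*}
(\beta-\alpha)(v_1-v_0)\ge D,\qquad v_0(1-\beta)+v_1\beta \ge D.
\end{equation*}
The objective $\gamma_p^T\big((1-\beta)U^{-1}(v_0)+\beta U^{-1}(v_1)\big)$ is convex and increasing in $(v_0,v_1)$. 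The first constraint shows that any feasible $\mathcal{R}$ must satisfy $\beta>\alpha$. I will then argue that both constraints bind at the optimum. If participation were slack, decrease $v_0$ and $v_1$ by the same amount; this keeps incentive compatibility and strictly lowers the cost. If incentive compatibility were slack, move $(v_0,v_1)$ toward each other along the line where $(1-\beta)v_0+\beta v_1$ is constant. By Jensen's inequality and the strict convexity of $U^{-1}$, this strictly lowers the cost.

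Solving the two equalities gives $v_1 - v_0 = D/(\beta-\alpha)$ and $v_0 = D - \beta D/(\beta-\alpha) = -\alpha D/(\beta-\alpha)$. Then $v_1 = (1-\alpha)D/(\beta-\alpha)$. These are exactly (\ref{eqn:pi0star}) and (\ref{eqn:pi1star}) once $(T,\mathcal{R})$ is replaced by its optimal value. Substituting back, the cost for fixed $T$ reduces to a function $F_T(\alpha,\beta)$. Scaling both terms by $(1-\beta)$ and $\beta$ and using convexity, I will check that $F_T$ is nondecreasing in $\alpha$ and nonincreasing in $\beta$ on the region $\beta>\alpha$. This monotonicity should follow from differentiating, or from the binding-constraint argument above applied to perturbed $(\alpha,\beta)$. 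Any region $\mathcal{R}$ can therefore be replaced by one whose pair $(\alpha,\beta)$ lies on the upper Neyman--Pearson frontier, i.e.\ maximal $\beta$ for its $\alpha$. By the Neyman--Pearson lemma together with the continuity of the Gaussian likelihoods, every frontier point is attained by a region of the form (\ref{eqn:lrt}). This is where Proposition~\ref{prop:optimality} enters: at the optimal payments, (\ref{eqn:alph}) and (\ref{eqn:beth}) hold with equality-type identities, so the proposition confirms that the LRT region is optimal for those payments.

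For existence, fix $T$ and parametrize the LRT regions by $\kappa\in[0,\infty]$. The map $\kappa\mapsto(\alpha,\beta)$ is continuous, because under both hypotheses the likelihood ratio has a continuous distribution; the paper later shows it is a generalized chi-squared law. The assumption $H_0\ne H_1$ should give $H_0\neq H_1$ already on $\mathbf{Y}_T$ for suitable $T$, which guarantees that some $\kappa$ yields $\beta>\alpha$. As $\beta-\alpha\to 0$ the required utility gap $D/(\beta-\alpha)$ blows up, so the cost tends to infinity. Near the endpoints $\alpha\to1$ or $\beta\to0$, the cost is either bounded below or again tends to infinity. Together these let me restrict attention to a compact set of $\kappa$ on which $F_T$ is continuous, so a minimizer exists. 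Because $T$ ranges over the finite set $\{1,\ldots,\overline{T}\}$, I then minimize over $T$ to obtain $T^*$.

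I expect the main obstacle to be this compactness and range issue. The formulas require $v_0,v_1$ to lie in the range of $U$, and $U$ may be bounded above, which constrains $(\alpha,\beta)$. One must also handle horizons $T$ for which $H_0$ and $H_1$ restricted to $\mathbf{Y}_T$ coincide, since those $T$ are infeasible. My first attempt will be to assume, or check, that $U$ is onto $\mathbb{R}$ for the relevant utility values. I will discard infeasible $T$, and then argue coercivity of $F_T$ directly near the boundary $\beta=\alpha$.
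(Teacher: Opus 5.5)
Your treatment of the payments and of the LRT structure is correct, and it follows a different route from the paper. The paper cites Laffont for the payments. It then presupposes an optimum $(T^*,\pi_0^*,\pi_1^*,\mathcal{R}^*)$, freezes $T^*,\pi_0^*,\pi_1^*$, and applies Proposition~\ref{prop:optimality} (constrained Neyman--Pearson) to the region-only problem, using $\mathcal{R}^*$ itself to certify (\ref{eqn:alph})--(\ref{eqn:beth}). You instead derive the binding constraints yourself and reduce to $F_T(\alpha,\beta)$. The monotonicity you need does hold. Write $h=U^{-1}$ and $c:=\gamma_a^{-T}(J_1^A-J_0^A)$ (your $D$). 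Then $\partial_\alpha F_T=\gamma_p^T\frac{c\beta(1-\beta)}{(\beta-\alpha)^2}\big(h'(v_1)-h'(v_0)\big)\ge0$. Using $h(v_1)-h(v_0)\le h'(v_1)(v_1-v_0)$ gives $\partial_\beta F_T\le\gamma_p^T\frac{c\alpha(1-\beta)}{(\beta-\alpha)^2}\big(h'(v_0)-h'(v_1)\big)\le0$. Your route therefore reduces to LRT regions for each fixed $T$ without assuming an optimum exists. It is also what actually justifies the threshold search of Section~\ref{sec:ctoc}, and it makes Proposition~\ref{prop:optimality} redundant.

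The genuine gap is existence, at exactly the step you flagged. Your claim that the cost blows up as $\beta-\alpha\to0$ fails at the $\kappa\to0$ end of the LRT family, where $\alpha,\beta\to1$. The ``bounded below'' alternative you allow there is precisely the case in which the infimum escapes to the boundary.

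Suppose $L_1/L_0$ is not bounded away from zero under $H_0$. This happens, e.g., with equal covariances and distinct means (as occurs at $T=1$ when $\Sigma_0=0$ and $M_0\neq M_1$), or when $S_0^{-1}-S_1^{-1}$ has a negative eigenvalue. For $\mathcal{R}=\{L_1\ge\kappa L_0\}$ one has $r:=(1-\beta)/(1-\alpha)\le\kappa$. The binding formulas give $v_1=c/(1-r)\to c$ and $v_0\to-\infty$. Yet $h$ is convex with $h(0)=0$ and $v_0\le0$, so $0\ge(1-\beta)h(v_0)\ge h'(0)(1-\beta)v_0\ge-h'(0)\,c\,r/(1-r)\to0$. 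Hence the cost tends to $J_1^P+\gamma_p^T U^{-1}(c)$.

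On the other hand, Jensen plus participation give cost $\ge J_1^P+\gamma_p^T U^{-1}(c)$ for every feasible contract. The inequality is strict, because feasibility forces $v_0<v_1$ and nondegenerate Gaussians force $0<\beta<1$. So for such $T$ the infimum is the first-best value and is never attained. This is Mirrlees' nonexistence phenomenon, reproduced inside two-payment contracts, and no compactness argument can succeed. The other end, $\kappa\to\infty$, is also delicate: there the cost tends to $J_1^P+\gamma_p^T\lim_{\beta\to0}\beta\,U^{-1}(c/\beta)$, which is finite when $U'$ is bounded away from zero.

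The paper's proof does not close this gap either: it only exhibits a feasible $\mathcal{R}$ with $\beta>\alpha$, which is not attainment. The existence claim therefore needs an extra hypothesis that makes the cost diverge at both ends of the LRT family. Your plan does work, for instance, for the limited-liability problem when $U'(\pi)\to0$ as $\pi\to\infty$. Without such a hypothesis, the LRT form and the payment formulas hold only conditionally on an optimum existing.
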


\begin{proof} 
For the contract design problem, suppose we solve the contract design problem (\ref{eqn:cost})--(\ref{eqn:pc}) by optimizing over the decision variables in the following order:
\begin{equation}
\label{eqn:opt1}
\min_{T\in\mathbb{Z}_+}\,\min_{\mathcal{R}\subseteq\mathbb{R}^{T\cdot p}}\ \min_{\pi_0,\pi_1 : \pi_1 \geq \pi_0}
\end{equation}
The third minimization is interpreted as optimizing $\pi_0,\pi_1$ subject to a fixed $T$ and $\mathcal{R}$. A standard result (Proposition 4.4 of \cite{Laffont}) shows  the solution to this third optimization is 
\begin{figure*}[!t]
\normalsize
\begin{equation}
\label{eqn:uvw}
    U_i = \begin{bmatrix}
C & 0 & \dots & 0 \\
C(A+BK_i) & C & \dots & 0 \\
C(A+BK_i)^2 & C(A+BK_i) & \dots & 0 \\
\vdots & \vdots & \ddots & \vdots \\
C(A+BK_i)^{T-1} & C(A+BK_i)^{T-2} & \dots & C
\end{bmatrix}\qquad  V_i = \begin{bmatrix} C(A+BK_i)^{\hphantom{1}} \\ C(A+BK_i)^2 \\ C(A+BK_i)^3 \\ \vdots \\ C(A+BK_i)^T \end{bmatrix}
\end{equation}
\hrulefill 
\vspace*{4pt}
\end{figure*}
\begin{align}
\pi_0^*(\mathcal{R}) &= \textstyle U^{-1}\Big(\gamma_a^{-T}\cdot\frac{-\alpha(\mathcal{R})}{\beta(\mathcal{R}) - \alpha(\mathcal{R})}\cdot(J_1^A-J_0^A)\Big)\\
\pi_1^*(\mathcal{R}) &= \textstyle U^{-1}\Big(\gamma_a^{-T}\cdot\frac{1-\alpha(\mathcal{R})}{\beta(\mathcal{R}) - \alpha(\mathcal{R})}\cdot(J_1^A-J_0^A)\Big)
\end{align}
Optimality of (\ref{eqn:pi0star}), (\ref{eqn:pi1star}) follows by finishing problem (\ref{eqn:opt1}) by optimizing over $\mathcal{R}$ and then $T$. This optimal solution exists because there exists $\mathcal{R}$ such that $\beta(\mathcal{R}) > \alpha(\mathcal{R})$ since we have $H_0 \neq H_1$ (see for instance Corollary 3.2.1 of \cite{lehmann2022testing}).

Next, let $J(T,\pi_0,\pi_1,\mathcal{R})$ be the objective function (\ref{eqn:cost}) in the contract design problem. We note that
\begin{multline}
    \min_{\mathcal{R}\subseteq\mathbb{R}^{T\cdot p}} \big\{J(T^*,\pi_0^*,\pi_1^*,\mathcal{R}) : (\ref{eqn:icc}), (\ref{eqn:pc}) \big\} \geq \\
\min_{\substack{T\in\mathbb{Z}_+, \mathcal{R}\subseteq\mathbb{R}^{T\cdot p}\\\pi_0,\pi_1 : \pi_1 \geq \pi_0}} \big\{J(T,\pi_0,\pi_1,\mathcal{R}) : (\ref{eqn:icc}), (\ref{eqn:pc})\big\}.
\end{multline}
Since equality occurs when setting $\mathcal{R} = \mathcal{R}^*$ on the left-hand side, this means that solving the optimization problem
\begin{equation}
    \min_{\mathcal{R}\subseteq\mathbb{R}^{T\cdot p}} \big\{J(T^*,\pi_0^*,\pi_1^*,\mathcal{R}) : (\ref{eqn:icc}), (\ref{eqn:pc}) \big\}
\end{equation}
is equivalent to solving the original contract design problem (\ref{eqn:cost})--(\ref{eqn:pc}). This equivalent optimization problem matches the setup of Proposition \ref{prop:optimality}, and so the result follows if we can show the existence of an $\mathcal{R}$ satisfying (\ref{eqn:alph}), (\ref{eqn:beth}). Note that some algebra shows $- U(\pi_0^*)/(U(\pi_1^*)-U(\pi_0^*)) = \alpha(\mathcal{R}^*)$
and for (\ref{eqn:Delta}) that $\Delta = \beta(\mathcal{R}^*) - \alpha(\mathcal{R}^*)$. We finally argue this implies the existence of an $\mathcal{R}$ satisfying (\ref{eqn:alph}), (\ref{eqn:beth}): Indeed, choosing $\mathcal{R} = \mathcal{R}^*$ satisfies (\ref{eqn:alph}), (\ref{eqn:beth}).
\end{proof}

\subsection{Limited Liability Constraints}
In some applications, it can be preferred to impose an additional constraint of the form $\pi_0 \geq 0$ on the contract design problem (\ref{eqn:cost})--(\ref{eqn:pc}). Such a constraint is known as a \emph{limited liability} constraint, and it imposes a requirement that the agent can never receive a negative payment (i.e., the agent can never be fined). Note that without this constraint, Theorem \ref{thm:lrtopt} shows that in general $\pi_0$ will be negative, indicating that the agent would be fined for a state trajectory that looks like that which would be likely under low effort.

\begin{theorem}
\label{thm:lrtopt_ll}
If we have that $H_0 \neq H_1$, then an optimal solution $T^*, \pi_0^*, \pi_1^*, \mathcal{R}^*$ exists for the contract design problem (\ref{eqn:cost})--(\ref{eqn:pc}) with a limited liability constraint $\pi_0 \geq 0$. The optimal payments are given by
\begin{align}
\pi_0^* &= 0\\
\pi_1^* &= \textstyle U^{-1}\Big(\gamma_a^{-T^*}\cdot\frac{1}{\beta(\mathcal{R}^*) - \alpha(\mathcal{R}^*)}\cdot(J_1^A-J_0^A)\Big)\label{eqn:pi1star_ll}
\end{align}
Furthermore, the optimal $\mathcal{R}^*$ has the form of (\ref{eqn:lrt}) with $T^*$.
\end{theorem}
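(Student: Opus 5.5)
We mirror the proof of Theorem~\ref{thm:lrtopt}, decomposing the problem as in (\ref{eqn:opt1}) and carrying the extra constraint $\pi_0\ge 0$ into the innermost minimization. The first step is to fix $T$ and $\mathcal{R}$ with $\beta(\mathcal{R})>\alpha(\mathcal{R})$, and solve for $\pi_0,\pi_1$ with $\pi_1\ge\pi_0\ge 0$. Work in utility variables $u_i=U(\pi_i)$, so that $\pi_i=U^{-1}(u_i)$ is convex and increasing because $U'>0$ and $U''<0$. The objective is then convex and increasing in $(u_0,u_1)$. The incentive constraint (\ref{eqn:icc}) becomes $(\beta-\alpha)(u_1-u_0)\ge\gamma_a^{-T}(J_1^A-J_0^A)$, and the participation constraint (\ref{eqn:pc}) becomes $u_0+\beta(u_1-u_0)\ge\gamma_a^{-T}(J_1^A-J_0^A)$.

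The second step exploits the fact that under limited liability $u_0\ge 0$. Then (\ref{eqn:icc}) implies (\ref{eqn:pc}), since $u_0+\beta(u_1-u_0)\ge\beta(u_1-u_0)\ge(\beta-\alpha)(u_1-u_0)$. So we minimize $(1-\beta)U^{-1}(u_0)+\beta U^{-1}(u_1)$ subject to the binding incentive constraint and $u_0\ge 0$. Write $u_1=u_0+D$ with $D=\gamma_a^{-T}(J_1^A-J_0^A)/(\beta-\alpha)$, which is the smallest feasible gap; any larger gap only raises cost, since the objective is increasing in $u_1$. Along this line the cost is increasing in $u_0$, because both terms increase. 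Hence $u_0=0$, giving $\pi_0^*=0$ and $\pi_1^*=U^{-1}(D)$, which is (\ref{eqn:pi1star_ll}). This is the standard limited-liability result (cf.\ \cite{Laffont}).

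The third step handles existence. Because $H_0\neq H_1$, there is an $\mathcal{R}$ with $\beta>\alpha$ (Corollary 3.2.1 of \cite{lehmann2022testing}), so the feasible set is nonempty. Regions with $\beta\le\alpha$ are infeasible for (\ref{eqn:icc}) when $\pi_1\ge\pi_0$. The remaining worry is whether the infimum over $\mathcal{R}$ is attained. Substituting the optimal payments, the cost for fixed $T$ is $J_1^P+\gamma_p^T\beta\,U^{-1}\big(\gamma_a^{-T}(J_1^A-J_0^A)/(\beta-\alpha)\big)$, which depends on $\mathcal{R}$ only through $(\alpha,\beta)$. For each $\alpha$, the likelihood-ratio family traces the upper boundary of the achievable $(\alpha,\beta)$ set, which is compact and convex. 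This reduces the problem to minimizing a continuous function over a compact set of pairs with $\beta-\alpha\ge\epsilon$. We still need to argue that the minimizer stays away from $\beta=\alpha$, where the cost blows up. Finitely many $T\le\overline{T}$ then gives existence of $T^*$. I expect this existence argument to be the main obstacle.

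Finally, to get the LRT form, fix $T^*,\pi_0^*=0,\pi_1^*$ and reproduce the sandwich argument of Theorem~\ref{thm:lrtopt}: the original problem is equivalent to minimizing over $\mathcal{R}$ with these payments fixed, so we need Proposition~\ref{prop:optimality}'s hypotheses. With $U(\pi_0^*)=0$, condition (\ref{eqn:alph}) reads $\alpha\le 0$, which is generally false. Instead, we argue directly that the problem reduces to minimizing $\beta$ subject to $\beta-\alpha\ge\Delta$, where $\Delta=\beta(\mathcal{R}^*)-\alpha(\mathcal{R}^*)$; the participation constraint is implied. Replacing $\mathcal{R}^*$ by the Neyman--Pearson region with the same $\beta$ attains the minimal $\alpha$, so $\beta-\alpha$ does not decrease and feasibility is kept at equal cost. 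This is the Neyman--Pearson lemma with size constraint on $\beta$, the continuity of the Gaussian distributions supplying $\kappa$. Hence $\mathcal{R}^*$ may be taken of the form (\ref{eqn:lrt}).
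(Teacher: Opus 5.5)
\textbf{Verdict.} Your route is the one the paper intends, and your first, second and fourth steps are sound, but the existence step is a genuine gap: it is left open, and its key premise is false for general $U$. The paper omits this proof, saying only that it mirrors Theorem~\ref{thm:lrtopt} with Proposition~\ref{prop:optimality} modified so that only incentive compatibility binds.

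\textbf{Steps that work.} The inner problem in utility coordinates gives $\pi_0^*=0$ and (\ref{eqn:pi1star_ll}). The conditions $u_0\ge 0$, $u_1\ge u_0$ and $\alpha\ge 0$ make (\ref{eqn:pc}) redundant. You correctly see that Proposition~\ref{prop:optimality} cannot be quoted, because (\ref{eqn:alph}) collapses to $\alpha\le 0$, and you replace it by a Neyman--Pearson swap at fixed $\beta$. Add one line there: the swapped region cannot have strictly smaller $\alpha$. If it did, (\ref{eqn:icc}) would be slack, so you could lower $\pi_1$ and strictly reduce the cost. Hence (\ref{eqn:pi1star_ll}) still holds with the likelihood-ratio region.

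\textbf{The existence gap.} You want to use that the cost blows up as $\beta-\alpha\to 0$, but this fails under the stated hypotheses on $U$.
With the optimal payments inserted, the fixed-$T$ cost is $J_1^P+\gamma_p^T\beta\,U^{-1}(c/(\beta-\alpha))$, where $c=\gamma_a^{-T}(J_1^A-J_0^A)$. Write $s:=\beta-\alpha$. Since $\beta\ge s$ and $U^{-1}$ is convex with $U^{-1}(0)=0$, the best general lower bound is $s\,U^{-1}(c/s)$. As $s\to 0$ this increases to $c/\lim_{\pi\to\infty}U'(\pi)$, which is infinite only when $U'(\pi)\to 0$.

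If $U'$ has a positive limit, existence can actually fail. Here is a counterexample:
\begin{itemize}
\item Take $U(\pi)=\pi+1-e^{-\pi}$, which satisfies $U(0)=0$, $U'>0$, $U''<0$.
\item Take $\overline{T}=1$, a scalar Gaussian mean shift of $0.1$ with equal variances (e.g.\ $\Sigma_0=0$), and $c=100$.
\item Likelihood-ratio regions $\{L_1\ge\kappa L_0\}$ with $\kappa\to\infty$ satisfy $\beta\to 0$ and $\alpha\le\beta/\kappa$, so $\alpha/\beta\to 0$. Since $U^{-1}(x)\le x$ for $x\ge 0$, the cost converges to $J_1^P+\gamma_p c$.
\item On the other hand, $U^{-1}(x)>x-1$, the Neyman--Pearson bound on $\beta$, and the total-variation bound $\beta-\alpha<0.04$ together give $\beta\,U^{-1}(c/(\beta-\alpha))>c$ for every feasible region.
\end{itemize}
So the infimum is not attained, and no optimal contract exists. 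This is the familiar non-existence of an optimal limited-liability contract when the agent is asymptotically risk neutral and the likelihood ratio is unbounded.

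\textbf{How to close it.} Your compactness argument needs an extra assumption such as $U'(\pi)\to 0$, which holds for the paper's $U=\sqrt{\pi}$; you also need $U$ unbounded above so that $U^{-1}$ is defined where required. Under these assumptions, the relevant sublevel set lies in $\{\beta-\alpha\ge\epsilon\}$ for some $\epsilon>0$. Compactness of the achievable $(\alpha,\beta)$-set (by Lyapunov's theorem, since the Gaussians are atomless) then finishes the proof.

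The paper does not supply this step either. Its existence argument for Theorem~\ref{thm:lrtopt}, which this theorem inherits, only exhibits a region with $\beta>\alpha$, i.e.\ it shows feasibility, not that the minimum is attained.
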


We omit the proof because it is roughly similar to the proof of Theorem \ref{thm:lrtopt}, with the main change being that Proposition \ref{prop:optimality} has to be changed to account for the fact that only the incentive compatibility constraint is binding under limited liability whereas both the incentive compatibility and participation constraints are binding in the original result.

\subsection{Decision to Induce Effort}
\label{sec:dieff}

After the principal has solved the contract design problem (\ref{eqn:cost})--(\ref{eqn:pc}) with or without limited liability constraints, which assumes the contract induces high effort from the agent, then the principal must decide whether it is beneficial from a cost perspective to induce high effort. If the principal declines to induce high effort, then they can simply offer a contract that consists of zero payment (i.e., they can set $\pi_0 = \pi_1 = 0$ with the choice of $T$ and $\mathcal{R}$ irrelevant) \cite{Laffont}. The principal would then incur an expected cost of $J^P_0$. 

The decision to induce effort now depends upon whether the following inequality is satisfied:
\begin{equation}
\label{eqn:eqndieff}
    J_1^P + \gamma_p^{T^*} \cdot \big(\pi_0^* \cdot (1-\beta(\mathcal{R}^*) + \pi_1^* \cdot \beta(\mathcal{R}^*)\big) \leq J_0^P,
\end{equation}
where $T^*,\pi_0^*,\pi_1^*,\mathcal{R}^*$ solve the contract design problem (\ref{eqn:cost})--(\ref{eqn:pc}) with or without liability constraints. The left-hand side of this inequality is the expected cost to the principal under an optimal contract that induces high effort, and the right-hand side is the expected cost to the principal when they offer no payment to the agent. If this inequality holds, then it is optimal to induce high effort and the principal offers $T^*,\pi_0^*,\pi_1^*,\mathcal{R}^*$ as the contract to the agent. If this inequality does not hold, then it is optimal to decline to induce high effort and the principal offers zero payment to the agent.

\section{COMPUTING THE OPTIMAL CONTRACT}
\label{sec:ctoc}
In this section, we provide an approach to design an optimal contract. We derive the likelihood ratio distribution, and  we provide an algorithm to compute the optimal contract.

\subsection{Derivation of Likelihood Ratio Test}
Since Theorems \ref{thm:lrtopt} and \ref{thm:lrtopt_ll} show that the LRT provides an optimal decision region, our next step is to derive the LRT. By a slight abuse of notation, interpret $\mathbf{Y}_T \in \mathbb{R}^{T\cdot p}$ as a vector defined by stacking the observations $y_1,\ldots,y_T$. We first derive the distribution of $\mathbf{Y}_T$.

\begin{proposition}
\label{prop:ytdist}
    Under the state-feedback controller $K_i$ for $i = 0$ or $i = 1$, we have $\mathbf{Y}_T \sim \mathcal{N}(M_i, S_i)$ for $M_i = U_i\boldsymbol{\mu}_W + V_i\mu_0$ and $S_i = U_i\mathbf{\Sigma}_WU_i^\top + V_i\Sigma_0V_i^\top + \mathbf{\Sigma}_E$, with $U_i\in\mathbb{R}^{(T\cdot p)\times(T\cdot n)}$ and $V_i \in \mathbb{R}^{(T\cdot p)\times n}$ given in (\ref{eqn:uvw}), and where $\boldsymbol{\mu}_W \in \mathbb{R}^{T\cdot n}$ is defined by stacking $\mu_w$ a total of $T$ times, $\mathbf{\Sigma}_W = \textrm{blkdiag}(\Sigma_w, \ldots, \Sigma_w) \in \mathbb{R}^{(T\cdot n)\times(T\cdot n)}$ where $\textrm{blkdiag}(\cdot)$ specifies a block diagonal matrix, and $\mathbf{\Sigma}_E = \textrm{blkdiag}(\Sigma_e, \ldots,\Sigma_e) \in \mathbb{R}^{(T\cdot p)\times(T\cdot p)}$.
\end{proposition}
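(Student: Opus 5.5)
The plan is to unroll the closed-loop recursion, write $\mathbf{Y}_T$ as an affine function of the independent Gaussian vectors $x_0$, $w_0,\ldots,w_{T-1}$, $\epsilon_1,\ldots,\epsilon_T$, and then read off the mean and covariance. Fix $i\in\{0,1\}$ and write $F_i := A+BK_i$. Substituting $u_k = K_i x_k$ into the dynamics gives $x_{k+1} = F_i x_k + w_k$. Here $K_i$ is read as acting on the state, as the model intends, even though its stated dimension is $q\times p$. A straightforward induction on $k$ gives
\begin{equation*}
x_k = F_i^k x_0 + \textstyle\sum_{j=0}^{k-1} F_i^{k-1-j} w_j, \qquad k\geq 1,
\end{equation*}
and therefore
\begin{equation*}
y_k = C F_i^k x_0 + \textstyle\sum_{j=0}^{k-1} C F_i^{k-1-j} w_j + \epsilon_k .
\end{equation*}

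Next, stack these identities for $k=1,\ldots,T$. Let $\mathbf{W} = (w_0,\ldots,w_{T-1})\in\mathbb{R}^{T\cdot n}$ and $\mathbf{E} = (\epsilon_1,\ldots,\epsilon_T)\in\mathbb{R}^{T\cdot p}$. I will check that
\begin{equation*}
\mathbf{Y}_T = V_i x_0 + U_i \mathbf{W} + \mathbf{E}.
\end{equation*}
Block row $k$ of $V_i$ is $C F_i^k$, which matches the $x_0$ term. Block $(k,m)$ of $U_i$, where the $m$-th block column multiplies $w_{m-1}$, is $C F_i^{k-m}$ for $m\le k$ and $0$ otherwise. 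Setting $j=m-1$ this is $C F_i^{k-1-j}$, which matches the sum in $y_k$. This index bookkeeping, in particular the shift between the noise index $j$ starting at $0$ and the block column index, is the only place care is needed, so I expect it to be the main (if modest) obstacle.

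Finally, $x_0$, $\mathbf{W}$ and $\mathbf{E}$ are mutually independent and Gaussian. So the concatenated vector $(x_0,\mathbf{W},\mathbf{E})$ is jointly Gaussian with mean $(\mu_0,\boldsymbol{\mu}_W,0)$ and block-diagonal covariance $\mathrm{blkdiag}(\Sigma_0,\mathbf{\Sigma}_W,\mathbf{\Sigma}_E)$. The block-diagonal form of $\mathbf{\Sigma}_W$ and $\mathbf{\Sigma}_E$ follows from the i.i.d. assumptions. An affine (here linear) image of a jointly Gaussian vector is Gaussian. This holds even when covariances are only positive semidefinite, if Gaussianity is understood through characteristic functions. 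Hence $\mathbf{Y}_T$ is Gaussian with
\begin{equation*}
\mathbb{E}[\mathbf{Y}_T] = V_i\mu_0 + U_i\boldsymbol{\mu}_W = M_i .
\end{equation*}
By independence the cross-covariances vanish, so
\begin{equation*}
\mathrm{Cov}(\mathbf{Y}_T) = V_i\Sigma_0V_i^\top + U_i\mathbf{\Sigma}_WU_i^\top + \mathbf{\Sigma}_E = S_i ,
\end{equation*}
which is the claim.
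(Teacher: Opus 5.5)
Your proposal is correct and takes the same route as the paper: unroll $x_k$ under $F_i = A+BK_i$, stack to get $\mathbf{Y}_T = U_i\mathbf{W} + V_i x_0 + \mathbf{E}$, then use joint Gaussianity of the independent $x_0,\mathbf{W},\mathbf{E}$ together with linearity and vanishing cross-covariances to read off $M_i$ and $S_i$. Your indexing of the measurement noise as $\epsilon_1,\ldots,\epsilon_T$ is slightly more careful than the paper's stacking of $\epsilon_0,\ldots,\epsilon_{T-1}$, though this is harmless because the $\epsilon_k$ are i.i.d.
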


\begin{proof}
    Let $\mathbf{W}_T \in \mathbb{R}^{T\cdot n}$ be defined by stacking the random variables $w_0,\ldots,w_{T-1}$, and let $\mathbf{E}_T \in \mathbb{R}^{T\cdot p}$ be defined by stacking the random variables $\epsilon_0,\ldots,\epsilon_{T-1}$. Under the assumptions in our model, it is well known that $x_{t} = (A+BK_i)^tx_0 + \sum_{\tau = 0}^{t-1}(A+BK_i)^{t-\tau-1}w_\tau$, implying
    \begin{equation}
        \mathbf{Y}_T = U_i\mathbf{W}_T + V_ix_0 + \mathbf{E}_T.\label{eqn:lincom}
    \end{equation} 
    Since $x_0, \mathbf{W}_T, \mathbf{E}_T$ are jointly Gaussian and independent, they are all together jointly Gaussian. Hence, any linear combination such as (\ref{eqn:lincom}) is also jointly Gaussian. This means we completely know the distribution of $\mathbf{Y}_T$ if we calculate its mean and covariance. By linearity of expectation, we have $\mathbb{E}(\mathbf{Y}_T) = U_i\mathbb{E}(\mathbf{W}_T) + V_i\mathbb{E}(x_0) + \mathbb{E}(\mathbf{E}_T) = 
        U_i\boldsymbol{\mu}_W + V_i\mu_0$. If we denote the covariance using the notation $\textrm{Cov}(\cdot)$, then $\textrm{Cov}(\mathbf{Y}_T) = \textrm{Cov}(U_i\mathbf{W}_T) + \textrm{Cov}(V_ix_0) + \textrm{Cov}(\mathbf{E}_T) = 
        U_i\mathbf{\Sigma}_WU_i^\top + V_i\Sigma_0V_i^\top + \mathbf{\Sigma}_E$, where we used independence of $x_0,\mathbf{W}_T,\mathbf{E}_T$ in the first equality. The result follows.
\end{proof}

Because jointly Gaussian random variables belong to the exponential family, it is easier to implement the LRT using the log-likelihood ratio. Our next result characterizes the distribution of the log-likelihood ratio.

\begin{theorem}\label{thm:distribution}
Let $L_0(\mathbf{Y}_T)$ be the likelihood of $\mathbf{Y}_T$ under $H_0$, and let $L_1(\mathbf{Y}_T)$ be the likelihood of $\mathbf{Y}_T$ under $H_1$. The log-likelihood ratio $LLR = \log L_1(\mathbf{Y}_T) - \log L_0(\mathbf{Y}_T)$ follows a generalized chi-squared distribution, expressed as a linear combination of independent non-central chi-squared variables and a standard normal random variable:
\begin{align}
    LLR \mid H_0 &\sim \textstyle\sum_{j \in \mathcal{K}_0} w_{j} \chi^2_1(\nu_{j}^2) + \sigma_{0} Z_0 + C_0, \\
    LLR \mid H_1 &\sim \textstyle\sum_{j \in \mathcal{K}_1} \bar{w}_{j} \chi^2_1(\bar{\nu}_{j}^2) + \sigma_{1} Z_1 + C_1,
\end{align}
where $Z_0, Z_1 \sim \mathcal{N}(0,1)$, and $\mathcal{K}_0, \mathcal{K}_1$ are the index sets of dimensions with non-zero weights. The distribution parameters are determined by the mean shift $D = M_1 - M_0$ and the covariance matrices $S_0, S_1$. Define the core symmetric matrices with their respective eigenvalue decompositions: $\Phi_0 = S_0^{1/2} S_1^{-1} S_0^{1/2} = \Psi_0 \Lambda \Psi_0^\top$ and $\Phi_1 = S_1^{1/2} S_0^{-1} S_1^{1/2} = \Psi_1 \bar{\Lambda} \Psi_1^\top$. The matrices $\Psi_0, \Psi_1 \in \mathbb{R}^{(T\cdot p) \times (T\cdot p)}$ are orthogonal matrices whose columns consist of the eigenvectors of $\Phi_0$ and $\Phi_1$, respectively. $\Lambda, \bar{\Lambda} \in \mathbb{R}^{(T\cdot p) \times (T\cdot p)}$ are the diagonal matrices containing the corresponding eigenvalues.
The generalized chi-squared weights and rotated linear coefficients are derived directly from these eigenvalues and eigenvectors: $w_j = (1 - \lambda_j)/2$ and $\bar{w}_j = (\bar{\lambda}_j - 1)/2$ with $\lambda_j$ and $\bar{\lambda}_j$ being the diagonal elements of $\Lambda$ and $\bar{\Lambda}$. The other parameters are: $b_0 = \Psi_0^\top S_0^{1/2} S_1^{-1} D$, $b_1 = \Psi_1^\top S_1^{1/2} S_0^{-1} D$, $nu_{j} = \frac{b_{0,j}}{2w_{j}}$, $\bar{\nu}_{j} = \frac{b_{1,j}}{2\bar{w}_{j}}$, $\sigma_0^2 = \sum_{j \notin \mathcal{K}_0} b_{0,j}^2$, $\sigma_1^2 = \sum_{j \notin \mathcal{K}_1} b_{1,j}^2$, $C_0 = ( \log |S_{0}|/|S_{1}| - D^\top S_{1}^{-1} D )/2 - \sum_{j \in \mathcal{K}_0} w_{j} \nu_{j}^2$, and $C_1 =( \log |S_{0}|/|S_{1}| + D^\top S_{0}^{-1} D )/2 - \sum_{j \in \mathcal{K}_1} \bar{w}_{j} \bar{\nu}_{j}^2$.
\end{theorem}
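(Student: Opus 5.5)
Starting from Proposition \ref{prop:ytdist}, I will work with the explicit Gaussian log-likelihoods. Assuming $S_0,S_1$ are positive definite, the log-likelihood ratio is
\begin{equation*}
LLR = \tfrac12\log\tfrac{|S_0|}{|S_1|} - \tfrac12(\mathbf{Y}_T-M_1)^\top S_1^{-1}(\mathbf{Y}_T-M_1) + \tfrac12(\mathbf{Y}_T-M_0)^\top S_0^{-1}(\mathbf{Y}_T-M_0),
\end{equation*}
which is a quadratic polynomial in $\mathbf{Y}_T$. The plan is to treat the two hypotheses separately. Under each one I will whiten $\mathbf{Y}_T$, diagonalize the resulting quadratic form by an orthogonal rotation, and complete the square coordinate by coordinate. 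Each coordinate then gives either a scaled noncentral $\chi^2_1$ or a purely linear Gaussian term.

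Under $H_0$, I write $\mathbf{Y}_T = M_0 + S_0^{1/2}Z$ with $Z\sim\mathcal N(0,I)$. Then $\mathbf{Y}_T - M_1 = S_0^{1/2}Z - D$, and expanding gives
\begin{equation*}
LLR = \tfrac12 Z^\top(I-\Phi_0)Z + D^\top S_1^{-1}S_0^{1/2}Z + \tfrac12\big(\log\tfrac{|S_0|}{|S_1|} - D^\top S_1^{-1}D\big).
\end{equation*}
Next I set $\xi = \Psi_0^\top Z$. This is again standard normal by orthogonal invariance. In these coordinates the quadratic part becomes $\sum_j w_j\xi_j^2$ with $w_j=(1-\lambda_j)/2$, and the linear part becomes $\sum_j b_{0,j}\xi_j$ with $b_0 = \Psi_0^\top S_0^{1/2}S_1^{-1}D$.

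For $j\in\mathcal K_0$, meaning $w_j\neq0$, I complete the square: $w_j\xi_j^2 + b_{0,j}\xi_j = w_j(\xi_j+\nu_j)^2 - w_j\nu_j^2$ with $\nu_j = b_{0,j}/(2w_j)$. Since $(\xi_j+\nu_j)^2\sim\chi^2_1(\nu_j^2)$, this term is a scaled noncentral chi-square. The coordinates $j\notin\mathcal K_0$ contribute only $\sum b_{0,j}\xi_j$. That sum is independent of the other coordinates and distributed as $\sigma_0 Z_0$ with $\sigma_0^2=\sum_{j\notin\mathcal K_0}b_{0,j}^2$. Collecting the constants gives $C_0$. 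Independence of the chi-square terms and $Z_0$ follows because they depend on disjoint coordinates of the i.i.d. vector $\xi$.

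The $H_1$ case is symmetric. I write $\mathbf{Y}_T = M_1 + S_1^{1/2}Z$, so that $\mathbf{Y}_T-M_0 = S_1^{1/2}Z + D$. This gives quadratic matrix $\tfrac12(\Phi_1 - I)$, linear coefficient $D^\top S_0^{-1}S_1^{1/2}Z$, and constant $\tfrac12(\log|S_0|/|S_1| + D^\top S_0^{-1}D)$. Rotating by $\Psi_1$ and completing the square exactly as before yields $\bar w_j$, $b_1$, $\bar\nu_j$, $\sigma_1$ and $C_1$.

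The main obstacle is bookkeeping rather than any conceptual step. I must check the cross-term sign and factor: the $\tfrac12$ times $2D^\top S_1^{-1}S_0^{1/2}Z$ gives exactly $b_0$ with no extra factor. I must also check that the completing-the-square constant $-w_j\nu_j^2$ is what is subtracted in $C_0$. A second point is the symmetric square roots, which need $S_i\succ0$; I will state this as the nondegeneracy assumption implicit in defining the likelihoods $L_0$ and $L_1$.
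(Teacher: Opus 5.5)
Your proposal is correct and follows the paper's proof essentially step for step. Under each hypothesis you whiten, obtain the quadratic form with matrix $(I-\Phi_0)/2$ (resp.\ $(\Phi_1-I)/2$), linear coefficient $S_0^{1/2}S_1^{-1}D$ (resp.\ $S_1^{1/2}S_0^{-1}D$) and the stated constants, then rotate by $\Psi_0$ (resp.\ $\Psi_1$) and complete the square coordinatewise. Your explicit requirement that $S_0,S_1$ be positive definite is a worthwhile addition: the model allows merely semidefinite noise covariances, so this is not guaranteed, and the paper leaves it implicit.
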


\begin{proof}
The log-likelihood ratio simplifies to $LLR = \big(\log |S_{0}|/|S_{1}| + (\mathbf{Y}_T - M_0)^\top S_0^{-1}(\mathbf{Y}_T - M_0) + 
    - (\mathbf{Y}_T - M_1)^\top S_1^{-1}(\mathbf{Y}_T - M_1) \big)/2$. By Proposition \ref{prop:ytdist}, the stacked observation vector is distributed as $\mathbf{Y}_T \sim \mathcal{N}(M_i, S_i)$ under hypothesis $i \in \{0, 1\}$. We determine the distribution of the log-likelihood ratio by expanding it into a multivariate quadratic form and applying the exact standardization and diagonalization method detailed in Das and Geisler \cite{das2020methods}.

\vspace{1ex}
\noindent \textbf{Case 1: Under the Null Hypothesis ($H_0$).} \\
Here, $\mathbf{Y}_T \sim \mathcal{N}(M_0, S_0)$. Following \cite{das2020methods}, we first apply the whitening transformation $\mathbf{z} = S_0^{-1/2}(\mathbf{Y}_T - M_0) \sim \mathcal{N}(0, I)$. This transforms $S$ to a new quadratic form $\tilde{q}(\mathbf{z}) = \mathbf{z}^\top \tilde{Q}_0 \mathbf{z} + \tilde{q}_0^\top \mathbf{z} + c_{0}$, where the standardized coefficients are computed as $\tilde{Q}_0 = (I - S_0^{1/2} S_1^{-1} S_0^{1/2})/2 = (I - \Phi_0)/2$, $\tilde{q}_0 = S_0^{1/2} S_1^{-1} D$, and $c_{0} = ( \log |S_0|/|S_1| - D^\top S_1^{-1} D )/2$. Because $\Phi_0$ is orthogonally diagonalizable as $R_0 \Lambda R_0^\top$, we can substitute this to decompose $\tilde{Q}_0$ as $\tilde{Q}_0 = (I - \Psi_0 \Lambda \Psi_0^\top)/2 = \Psi_0 [ (I - \Lambda)/2 ] \Psi_0^\top \equiv \Psi_0 W \Psi_0^\top$, where $W$ is diagonal with  $w_j = (1 - \lambda_j)/2$. We define the rotated standard normal vector $\mathbf{y} = \Psi_0^\top \mathbf{z} \sim \mathcal{N}(0, I)$. Substituting $\mathbf{z} = \Psi_0 \mathbf{y}$ decouples the quadratic form into a sum of independent scalar variables:
\begin{align}
    \tilde{q}(\mathbf{y}) &= (\Psi_0 \mathbf{y})^\top (\Psi_0 W \Psi_0^\top) (\Psi_0 \mathbf{y}) + \tilde{q}_0^\top (\Psi_0 \mathbf{y}) + c_{0} \nonumber \\
    &= \mathbf{y}^\top W \mathbf{y} + b_0^\top \mathbf{y} + c_{0} \nonumber \\
    &= \textstyle\sum_{j=1}^{T\cdot p} \left( w_j y_j^2 + b_{0,j} y_j \right) + c_{0},
\end{align}
where $b_0 = \Psi_0^\top \tilde{q}_0$. We now partition this sum into dimensions with non-zero weights ($j \in \mathcal{K}_0$) and zero weights ($j \notin \mathcal{K}_0$). For dimensions where $w_j \neq 0$, we complete the square: $w_j y_j^2 + b_{0,j} y_j = w_j ( y_j + b_{0,j}/(2w_j) )^2 - b_{0,j}^2/(4w_j) \sim w_j \chi^2_1(\nu_j^2) - w_j \nu_j^2$, where $\nu_j = b_{0,j}/(2w_j)$ defines the non-centrality parameter. For dimensions where $w_j = 0$, the quadratic term vanishes. The remaining linear terms aggregate into a single standard normal distribution: $\sum_{j \notin \mathcal{K}_0} b_{0,j} y_j \sim \mathcal{N}(0, \sum_{j \notin \mathcal{K}_0} b_{0,j}^2) \equiv \sigma_0 Z_0$. Finally, substituting these independent random variables back into the sum and absorbing the constants yields the distribution for $H_0$: $LLR \mid H_0 \sim \sum_{j \in \mathcal{K}_0} w_j \chi^2_1(\nu_j^2) + \sigma_0 Z_0 + c_{0} - \sum_{j \in \mathcal{K}_0} w_j \nu_j^2$, where we get $C_0 = c_{0} - \sum_{j \in \mathcal{K}_0} w_j \nu_j^2$.

\vspace{1ex}
\noindent \textbf{Case 2: Under the Alternative Hypothesis ($H_1$).} \\
By symmetry, under $H_1$, $\mathbf{Y}_T \sim \mathcal{N}(M_1, S_1)$. The standardizing transformation $\bar{\mathbf{z}} = S_1^{-1/2}(\mathbf{Y}_T - M_1)$ maps $S$ into the quadratic form $\bar{q}(\bar{z}) = \bar{\mathbf{z}}^\top \tilde{Q}_1 \bar{\mathbf{z}} + \tilde{q}_1^\top \bar{\mathbf{z}} + c_{1}$ where the quadratic coefficients are defined as $\tilde{Q}_1 = (S_1^{1/2} S_0^{-1} S_1^{1/2} - I)/2 = (\Phi_1 - I)/2$, $\tilde{q}_1 = S_1^{1/2} S_0^{-1} D$, and $c_{1} = ( \log |S_0|/|S_1| + D^\top S_0^{-1} D )/2$. Applying the same direct decomposition and rotation decoupling as in the $H_0$ case algebraically produces the alternative weights $\bar{w}_j$, non-centralities $\bar{\nu}_j$, normal variance $\sigma_1^2$, and offset $C_1$, which concludes the proof.
\end{proof}

\begin{table}[t]
    \centering
    \caption{Mathematical Assumptions and Simulation Parameters}
    \label{tab:math_assumptions}
    \renewcommand{\arraystretch}{1.05} 
    \begin{tabular}{lcc}
        \hline
        \textbf{Parameter} & \textbf{Ex. 1: LFC} & \textbf{Ex. 2: Wellness} \\
        \hline
        $x_{k+1}$ & $Ax_k + Bu_k + \omega_k$ & $a x_k + b u_k + \omega_k$ \\
        $y_k$ & $Cx_k + \epsilon_k$ & $c x_k + \epsilon_k$ \\
        $x_k$ & $[x_g, x_t, x_p, x_i]^\top$ & $\text{scalar weight}$ \\
        $C$ & $I_{4 \times 4}$ & $1$ \\
        $x_0$ & $\mathcal{N}([0,0,0.1,0]^\top, 10^{-9}\cdot I)$ & $\mathcal{N}(65, 0)$\\
        $w_k$ & $\mathcal{N}(0, 0.01\cdot I)$ & $\mathcal{N}(0.817, 0.5)$ \\
        $\epsilon_k$ & $\mathcal{N}(0, 0.01 \cdot I)$ & $\mathcal{N}(0, 0.01)$ \\
        $u_0$ & $K_0 x_k (\lambda = 0.2)$ & $\delta_0x_k(\delta_0 = 70)$ \\
        $u_1$ & $K_1 x_k (\lambda = 0.05)$ & $\delta_1x_k(\delta_1 = 110)$  \\
        $J^A_1 - J^A_0$ & 0.1 & 5\\
        Time Step & $0.1s$ & 1 week \\
        \hline
    \end{tabular}
\end{table}
\subsection{Computational Algorithm}

Our characterization of the optimal contract structure in Section \ref{sec:soc} is significant because  it allows us to convert solving the contract design problem (\ref{eqn:cost})--(\ref{eqn:pc}), which is a complex, infinite dimensional-optimization problem, into two line searches, one of which is nested within the other.

For the contract design problem (\ref{eqn:cost})--(\ref{eqn:pc}), define the decision region $R(T,\eta) = \{LLR \geq \eta\}$ where $LLR$ is as defined in Theorem \ref{thm:distribution}, and define the payments
\begin{align}
\pi_0(T,\eta) &= \textstyle U^{-1}\Big(\gamma_a^{-T}\cdot\frac{-\alpha(\mathcal{R}(T,\eta))}{\beta(\mathcal{R(T,\eta)}) - \alpha(\mathcal{R}(T,\eta))}\cdot(J_1^A-J_0^A)\Big)\\
\pi_1(T,\eta) &= \textstyle U^{-1}\Big(\gamma_a^{-T}\cdot\frac{1-\alpha(\mathcal{R}(T,\eta))}{\beta(\mathcal{R(T,\eta)}) - \alpha(\mathcal{R(T,\eta)})}\cdot(J_1^A-J_0^A)\Big)
\end{align}
Then we can solve the contract design problem (\ref{eqn:cost})--(\ref{eqn:pc}) using the following algorithm:
\begin{enumerate}
    \item For $t \in \{1,\ldots,\overline{T}\}$,
    \begin{enumerate}
        \item For $\eta \in \mathbb{R}$, set $J(T,\eta) := J_1^P + \gamma_p^{T} \cdot (\pi_0(T,\eta) \cdot (1-\beta(\mathcal{R}(T,\eta)) + \pi_1(T,\eta) \cdot \beta(\mathcal{R}(T,\eta))$;
        \item Set $\eta^*(T) := \arg\min_{\eta \in \mathbb{R}}J(T,\eta)$;
    \end{enumerate}
    \item Set $T^* := \arg\min_{t\in\{1,\ldots,\overline{T}\}} J(T,\eta^*(T))$;
    \item Set $\eta^* := \eta^*(T^*)$, set $\pi_0^* := \pi_0(T^*,\eta^*)$, and set $\pi_1^* := \pi_1(T^*,\eta^*)$;
\end{enumerate}
Though steps 1.a and 1.b are infinite-dimensional, numerically we can consider a bounded range for $\eta$ and perform the calculations at a finite number of points within this bounded range. Also, note that $\alpha,\beta$ can be computed using Imhof's method, via Davies's algorithm \cite{Imhof1961, Davies1980, Das2025}. In our experiments detailed in the next section, we use the Python package \texttt{chi2comb} \cite{horta2021chi2comb} for the computation.

The contract design problem (\ref{eqn:cost})--(\ref{eqn:pc}) with a limited liability constraint $\pi_0 \geq 0$ can be solved with the algorithm given above but with the change that we use
\begin{align}
\pi_0(T,\eta) &= 0\\
\pi_1(T,\eta) &= \textstyle U^{-1}\Big(\gamma_a^{-T}\cdot\frac{1}{\beta(\mathcal{R(T,\eta)}) - \alpha(\mathcal{R(T,\eta)})}\cdot(J_1^A-J_0^A)\Big)
\end{align}
as our payment amounts in the algorithm.

\section{APPLICATION EXAMPLES}
\label{sec:ae}

\begin{figure}[t]
    \centering
    \includegraphics[width=0.45\textwidth]{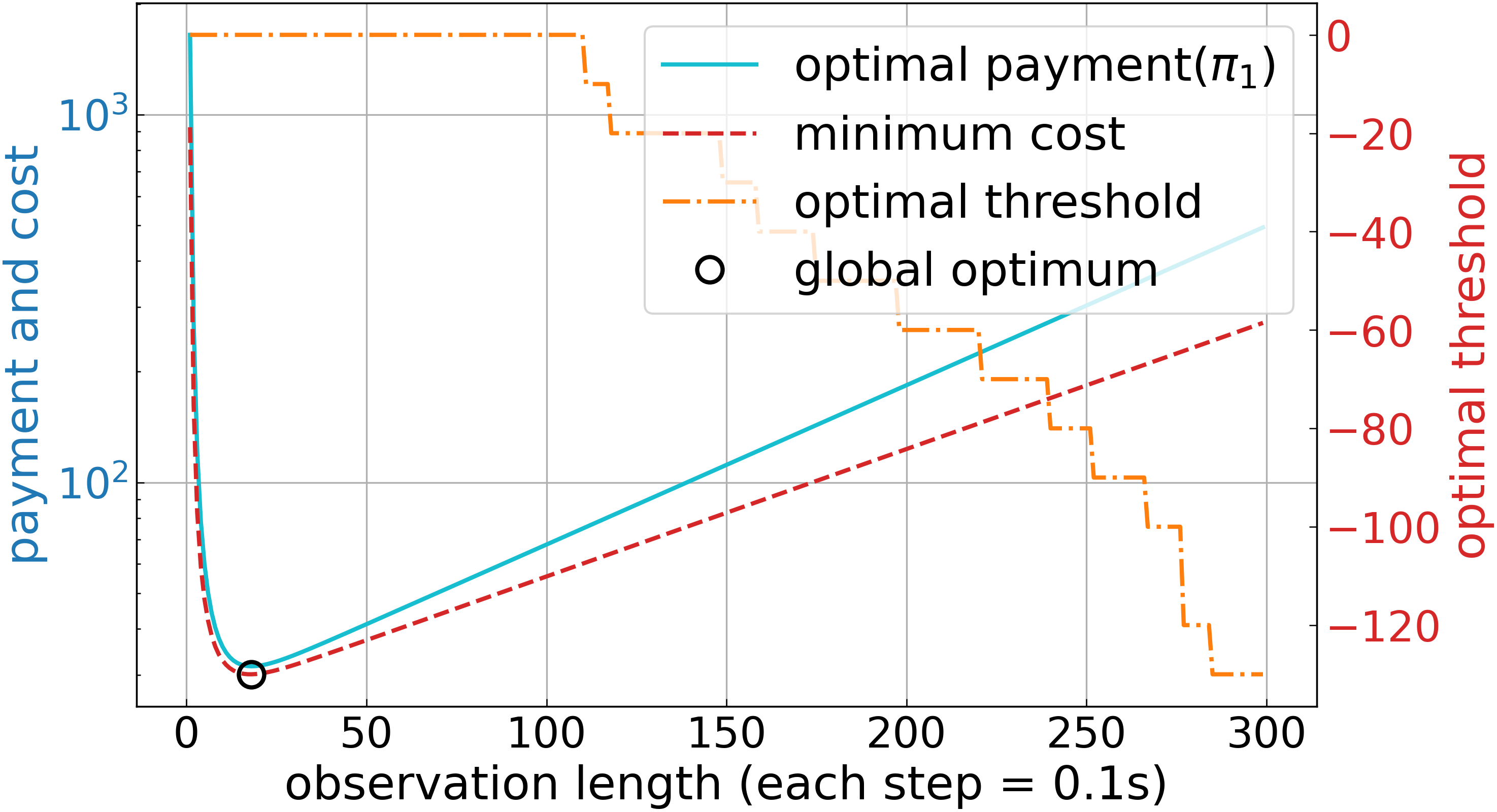}
    \caption{Optimal Contract Parameters for LFC Example.}
    \label{fig:optimal_lfc}
\end{figure}

We apply our framework to  applications from power systems and health care. We focus on the contract design problem (\ref{eqn:cost})--(\ref{eqn:pc}) with limited liability constraints (i.e., the agent will not accept being fined). In both examples, we use $U(\pi) = \sqrt{\pi}$ as the agent's utility function, which satisfies $U' > 0$ and $U'' < 0$ for nonnegative payments. We interpret $\pi$ as having units of US Dollars. We do not consider whether to induce effort as discussed in Section \ref{sec:dieff}, because this decision uses a simple inequality (\ref{eqn:eqndieff}) and because the answer is highly sensitive to the choice for the value of $J_1^P-J_0^P$.

In both examples, we also assume that the agent's discount factor is strictly lower than the principal's (i.e., $\gamma_a < \gamma_p$). This is expected if the agent has a higher return on investments than the principal. It is reasonable in our examples where the principal is a large entity, like an Independent System Operator (ISO) or a health insurance company, which would invest more conservatively than a smaller entity. In our simulations, we set $\gamma_a = 0.995$ and $\gamma_p = 0.998$. 
 
\subsection{Load Frequency Control (LFC) Model}
Load frequency control (LFC) \cite{Tan2010} is important for stabilizing the frequency in a power system. Here, the principal (e.g., the ISO) offers a performance-based contract to incentivize the agents (e.g., power generators) to adopt a high-performance control policy ($K_1$) for frequency regulation. When the ISO cannot directly inspect the generator's internal control settings, a moral hazard problem arises. The contract design we suggest enables the principal to infer the internal settings of the generator from a partial, noisy history of the system states, thereby incentivizing the implementation of high-performance control.

We discretize a single-area LFC system with a sampling period of $T = 0.1s$ using Zero-Order Hold (ZOH) \cite{Tan2010}. The state-space distributions and system variables are summarized in Table \ref{tab:math_assumptions}. Based on benchmark parameters $T_G = 0.08, T_T = 0.3, T_P = 20, K_P = 120$, and $R =2.4$, the discrete-time system matrices are
\begin{equation*}
A = \begin{bmatrix}
0.287 & 0 & 0 & 0 \\
0.156 & 0.717 & 0 & 0 \\
0.061 & 0.509 & 0.995 & 0 \\
0.002 & 0.027 & 0.100 & 1
\end{bmatrix}\qquad 
B = \begin{bmatrix}
0.713 \\ 0.127 \\ 0.029 \\ 0.001
\end{bmatrix}.
\end{equation*}
We simplify the IMC-PID tuning from \cite{Tan2010} for the agent's controller: By omitting the disturbance rejection filter $Q_d(s)$, we choose $K_0$ (low effort) and $K_1$ (high effort) by setting the closed-loop time constant $\lambda$ to $0.2$ and $0.05$, respectively. This results in $K_0 = [0, 0.0002, -0.0856, -0.0139]$ and $K_1 = [0, -2.2033, -0.6932, -0.0556]$. 

Figure \ref{fig:optimal_lfc} shows the optimal transfer $\pi_1^*$, decision threshold $\ln \eta^*$, and expected cost as a function of the contract length $T$. We assume the agent maintains a chosen control setting over a physical operating cycle of 30 seconds, with the possibility of switching their controller every 30 seconds. Accordingly, we use a maximum observation horizon of 30 seconds ($\overline{T} = 300$). The globally optimal observation length is attained at $T^* = 18$ ($1.8$ seconds), where the principal’s expected cost is minimized at $30.0923$, requiring an optimal incentive payment of $\pi_1^* = \$31.6495$.


\subsection{Wellness Incentive Program}
Imagine a wellness program where a health insurance company provides payments to encourage weight loss over 200 weeks. The Mifflin-St Jeor equation \cite{mifflin1990new,aswani2019behavioral} gives the coefficients of an affine LTI system to describe weight $x_t$ (kg) dynamics under modest amounts of weight change, where the coefficients depends upon an individual's age, starting weight, height, and gender. For a 30-year-old female ($h = 160.0$ cm, $x_0 = 65.0$ kg), the dynamics with a time step of one-week are $    x_{t+1} = 0.999 x_t -6.429 u_t + 1.286\times 10^{-4} f_t-0.618 + z_t$,  where $z_t$ is a random (biologically-driven) fluctuation, $f_t$ is caloric intake (we set $f_t \equiv 1600$ kcal), and $u_t$ is physical activity in units of steps. We modeled the agent's effort as a state-dependent feedback control $u_t = \delta_i x_t$, where $\delta_1 = 110$ represents a high-effort regimen and $\delta_0 = 70$ represents baseline activity. Figure \ref{fig:optimal_wellness} illustrates the trade-offs involved in designing the wellness incentive program. The global minimum of the insurance company's cost occurs with a contract length of 80 weeks, where the cost is 121.58 and optimal $\pi_1$ is \$217.02.

\begin{figure}[t]
    \centering
    \includegraphics[width=0.45\textwidth]{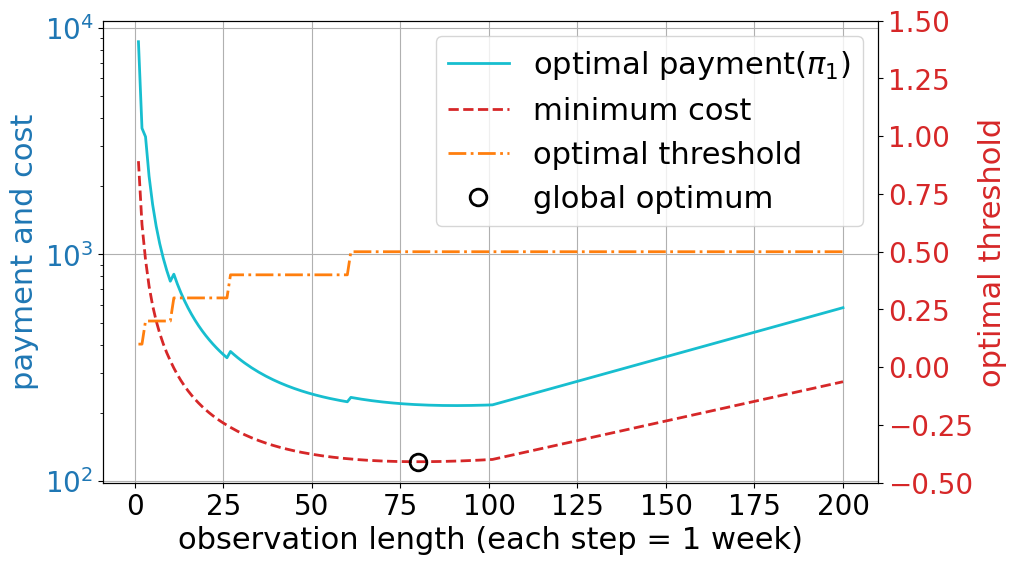}
    \caption{Optimal Contract Parameters for Wellness Example.}
    \label{fig:optimal_wellness}
\end{figure}


\subsection{Results and Discussion}
Despite the different physical contexts of the two examples, our simulations reveal a functionally identical trade-off in the contract design. Figures \ref{fig:optimal_lfc} and \ref{fig:optimal_wellness} illustrate how the principal's expected cost and the optimal high-effort payment ($\pi_1^*$) evolve across varying contract evaluation horizons ($T$). In both scenarios, short evaluation horizons force the principal to incur higher expected costs. Over a short horizon, system stochasticity dominates the observed state trajectories, forcing the principal to offer a substantially larger payment $\pi_1^*$ to overcome uncertainty and make the contract feasible. As $T$ increases, the structural divergence between the high-effort and low-effort policies begins to overpower the noise ($w_k$, $\epsilon_k$). This effectively reduces the informational friction between the principal and the agent, reducing the required statistical risk premium. After a certain point, this statistical benefit is counterbalanced by the cost of delayed compensation driven by the discount factor asymmetry ($\gamma_a < \gamma_p$) as the exponential multiplier $(\gamma_p / \gamma_a^2)^T$ grows, creating the  U-shaped cost curves observed.

\section{CONCLUSION}

We studied incentive design for stochastic LTI systems with moral hazard. When the agent chooses between two controllers, we showed that the optimal contract with two payment levels uses a likelihood ratio test to determine payments. We finished by conducting numerical experiments with two examples, which demonstrated that the optimal contract evaluation horizon is governed by a trade-off of extending the observation window, which allows a statistically more powerful test, and the cost of delayed compensations. Future work involves generalizing our results to nonlinear dynamics and controllers and non-quadratic costs.

\bibliographystyle{IEEEtran}
\bibliography{main}

\end{document}